\newtheorem*{theorem*}{Theorem}
\newtheorem{theorem}{Theorem}[section]
\theoremstyle{definition}
\newtheorem{definition}[theorem]{Definition}
\newtheorem{example}[theorem]{Example}
\newtheorem{proposition}[theorem]{Proposition}
\newtheorem{corollary}[theorem]{Corollary}
\theoremstyle{remark}
\newtheorem{remark}[theorem]{Remark}
\numberwithin{equation}{section}
\newcommand{\set}[1]{\left\lbrace#1\right\rbrace} 
\begin{document}

\title{The Odd Hummer Principle}



\author{Sergio Alejandro Fernandez de soto Guerreo}
\email{sergio.fernandez@tugraz.at}
\thanks{}


\begin{abstract}
The Hummer Principle was born from the mind of Bob Hummer in 1946, which consists of performing card shuffles with an even number of cards while leaving some properties of the deck intact. In this document, we will present a generalization of this principle for an odd number of cards, along with ideas for applying and adapting magic tricks that use the Hummer Principle to a more general setting. Finally, we will discuss future work in this area.
\end{abstract}

\maketitle


\bibliographystyle{amsplain}

\section{Introduction}

In the realm of mathematical magic, there are well-known figures. One of whom is Bob Hummer, a magician from the last century whose presence left an indelible mark on the world of magic, and not only that, but also on mathematical magic. Hummer developed multiple principles and tricks throughout his life, and one of the most well-known of these is the so-called Hummer principle, which states the following in mathematical notation.

\begin{theorem*}[Hummer's principle]
Let be $n\in\mathbb{N}$. Consider a deck of $2n$ cards, all face down. After any number of CATO shuffles we have the following regularity:

\begin{itemize}
    \item The quantity of face-up cards in even-numbered positions is equal to the quantity of face-up cards in odd-numbered positions.
\end{itemize}

\end{theorem*}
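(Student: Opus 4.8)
The plan is to find a statistic attached to the individual cards that survives every move a CATO shuffle can perform, and then to read the theorem off from the value this statistic takes on the starting deck. I will treat a CATO move as built from two atomic operations: a cyclic cut, which moves some number $j$ of cards from the top to the bottom, and turning the top two cards over as a block, which interchanges the cards in positions $1$ and $2$ and reverses the orientation of each of them.

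First I would set up coordinates: number the positions $1,\dots,2n$ from the top and let $o\in\{0,1\}$ record the orientation of a card ($0=$ face down, $1=$ face up). To the card currently occupying position $i$ I attach the parity $\theta:=(i+o)\bmod 2$; equivalently, call a card \emph{aligned} when it is face down in an odd position or face up in an even position, and \emph{misaligned} otherwise, so that aligned $\iff \theta=1$. The key observation is then a short case check: turning over the top two cards changes both the position-parity and the orientation of each of the two moved cards, hence leaves their $\theta$ unchanged; a cut by an even $j$ changes no position-parity and so fixes every $\theta$; and a cut by an odd $j$ flips every position-parity simultaneously (because $2n$ is even), hence flips all $2n$ values of $\theta$ at once. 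Thus every atomic move, and so every CATO shuffle, either preserves the multiset $\{\theta_1,\dots,\theta_{2n}\}$ or replaces it by its $0\leftrightarrow 1$ complement.

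From here the argument is bookkeeping. Let $A$ be the number of aligned cards. The previous paragraph shows that after any CATO shuffle $A$ equals either its initial value or $2n$ minus it. For the all-face-down deck every card has $o=0$, so the aligned cards are exactly the $n$ cards in odd positions; hence $A=n$ at the start, and since $2n-n=n$ as well, $A=n$ after \emph{every} shuffle. Writing $E_{\uparrow}$ and $O_{\uparrow}$ for the numbers of face-up cards in even and in odd positions, the aligned cards are precisely the face-down cards in odd positions (there are $n-O_{\uparrow}$ of these) together with the face-up cards in even positions, so $n=A=(n-O_{\uparrow})+E_{\uparrow}$, which gives $E_{\uparrow}=O_{\uparrow}$.

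The step I expect to require the most care is the handling of cuts by an odd number of cards: there the per-card invariant $\theta$ is not individually preserved, and one is forced to argue at the level of the whole multiset, using that the all-down deck has equally many aligned and misaligned cards. This is exactly where the hypothesis that the deck has an \emph{even} number of cards enters, and it is presumably the point that must be reworked for the odd version.
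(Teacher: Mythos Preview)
Your argument is correct. One small point worth tightening: the paper's Definition~\ref{def:cato-shuffle} of CATO allows turning over the top $k$ cards for \emph{any} even $k$, not only $k=2$, so your opening reduction to the atomic pair ``cut by $j$'' and ``flip the top two'' needs a word of justification. The paper itself establishes this reduction later in Proposition~\ref{proposition:cato-as-actions}, but since you are arguing from scratch it is cleaner simply to check your invariant for a general even flip: the card at position $i\le k$ lands at position $k+1-i$ with its orientation toggled, and $(k+1-i)+(o+1)\equiv i+o\pmod 2$ because $k$ is even, while cards with $i>k$ are untouched.

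The route, however, is genuinely different from the paper's. The paper does not write out a proof of Hummer's principle at all; it proves the red--black separation principle (Theorem~\ref{theorem:red-black-separation}) and then remarks, without details, that Hummer's principle follows from it together with the CATO definition and position parity. You bypass that lemma entirely and track the per-card statistic $\theta=(i+o)\bmod 2$ through the moves directly. What your approach buys is a self-contained argument that makes the role of the hypothesis $|D|=2n$ completely explicit: as your closing paragraph already notes, odd cuts are the only place it is used, and that is precisely the obstruction the paper's DATO shuffles are later designed to remove. What the paper's sketch buys is a historical and conceptual link to the older separation principle of Stewart James, at the cost of leaving the reader to reconstruct the invariance step.
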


This principle is known to be a tool that allows for shuffling and still maintaining control over much of the information of the cards, such as their orientation, color, value, among many others, using CATO shuffles, the acronym of \textbf{Cut And Turn Over} (see section 2).

However, since Bob Hummer first introduced this principle in 1946 (in ``Face up face down mysteries'' \cite{hummer}), no one had considered applying it to an odd number of cards in the literature, as the principle is formulated for $2n$ cards. Even renowned names in this field such as Colm Mulcahy, Woody Aragon, Pedro Alegría, Sergio Belmonte, Matt Baker, Leonard Rangel, among others, had not heard of applying the principle to an odd number of cards.

This manuscript presents such a generalization and some applications of it to magic. This was achieved by studying the Hummer principle from the perspective of group actions on sets to obtain shuffles analogous to CATO for different configurations of a deck of cards. Thanks to this approach to the principle, we arrived at what we will call Hummer invariants, DATO shuffles (acronym of Deal And Turn Over), and the Odd-Hummer principle.

\begin{theorem*}[Odd Hummer Principle]
Consider a deck with an odd number of face-down cards. After any number of DATO shuffles, we have the following:

\begin{itemize}
\item The face-up cards in even positions were originally cards in an odd position and vice versa, the face-up cards in odd positions were originally cards in an even position.
\end{itemize}
\end{theorem*}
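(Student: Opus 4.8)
The plan is to attach to every individual card a single bit and prove it is frozen by each DATO shuffle. Label the cards by the positions they occupy at the start, and for a card $c$ in a given configuration write $o(c)\in\{0,1\}$ for its orientation ($0$ if face down, $1$ if face up), $\pi(c)\in\{0,1\}$ for the parity of the position it currently occupies, and $\pi_{0}(c)\in\{0,1\}$ for the parity of its starting position (a constant depending only on $c$). Define the \emph{Hummer invariant} of $c$ to be $\iota(c):=o(c)+\pi(c)+\pi_{0}(c)$, computed in $\mathbb{Z}/2\mathbb{Z}$; this is a per-card refinement of the counting quantity behind the classical principle. In the initial deck every card is face down and has not moved, so $o(c)=0$ and $\pi(c)=\pi_{0}(c)$, hence $\iota(c)=0$ for all $c$. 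The statement to prove is that $\iota(c)=0$ for every $c$ after any number of DATO shuffles (abbreviated $\iota\equiv 0$).

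Granting this, I would conclude immediately: at any stage $o(c)=\pi(c)+\pi_{0}(c)$ in $\mathbb{Z}/2\mathbb{Z}$, so a face-up card ($o(c)=1$) satisfies $\pi(c)\neq\pi_{0}(c)$, i.e.\ the parity of its current position is the opposite of the parity of its starting position. That is exactly the assertion that a face-up card now in an even position started in an odd one, and a face-up card now in an odd position started in an even one. Note that this uses the full equivalence ``face up $\iff$ position-parity has changed'' encoded by $\iota\equiv 0$, which is slightly stronger than the one-directional statement of the principle; the stronger form is also what lets the induction below close, since the one-directional version is not by itself inductively stable.

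The heart of the matter is the invariance of $\iota$, which I would prove by induction on the number of DATO shuffles, with the all-face-down deck as base case. For the inductive step it suffices that one DATO shuffle takes a configuration with $\iota\equiv 0$ to another one with $\iota\equiv 0$. A DATO shuffle picks out a contiguous packet consisting of an \emph{even} number of cards, turns that packet over as a block, and puts it back (cf.\ Section~2). Turning the packet over reverses the orientation of each of its cards and does not touch any card outside it, so $o(c)$ flips for $c$ in the packet and is unchanged otherwise. The delicate point is the effect on $\pi(c)$: if a block of $L$ cards fills $L$ consecutive positions and $L$ is even, then reversing the block sends the card in its $j$-th slot to its $(L+1-j)$-th slot, and since $(L+1-j)-j=L+1-2j$ is odd (because $L$ is even) the position of that card changes by an odd amount, so its parity flips, while cards outside the block are unmoved. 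Hence for a card in the packet both $o(c)$ and $\pi(c)$ flip and $\iota(c)=o(c)+\pi(c)+\pi_{0}(c)$ is unchanged in $\mathbb{Z}/2\mathbb{Z}$, whereas for a card outside nothing changes at all; so $\iota\equiv 0$ is preserved and the induction runs.

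I expect the main obstacle to be making that last step airtight against the precise definition of a DATO shuffle: one must verify that every DATO is built purely from turnovers of \emph{even-length} blocks (together with any repositioning that shifts cards by an even amount), since those are exactly the operations under which a card's orientation and the parity of its position flip together, or neither flips. This is also where the word ``odd'' earns its place. For a deck of even size one has in addition the move ``turn the whole deck over,'' which is itself an even-length turnover and causes no trouble; for a deck of odd size a full turnover has \emph{odd} length, reverses every card's orientation but preserves every card's position-parity, and so would break $\iota\equiv 0$. Consequently the shuffles admissible on an odd deck are precisely the parity-respecting ones, and once each elementary move comprising a DATO has been matched against the ``flip both or flip neither'' dichotomy, the remainder is routine bookkeeping.
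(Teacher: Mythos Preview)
Your invariant $\iota(c)=o(c)+\pi(c)+\pi_{0}(c)\in\mathbb{Z}/2\mathbb{Z}$ is exactly the right object, and checking it on each generator is precisely what the paper does (without naming $\iota$). So the strategy matches the paper's.

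The gap is that you have analyzed only half of the DATO moves. A DATO shuffle is \emph{not} merely ``pick an even block and turn it over''; Definition~\ref{def:dato} has two generators: the turnover $\delta_k$ (which you handled correctly) and the \emph{deal} $\gamma_j$, which takes $(1,\ldots,n)$ to $(j{+}1,\ldots,n,\,j,j{-}1,\ldots,1)$ with $j$ even and flips no card. Your parenthetical ``repositioning that shifts cards by an even amount'' does not cover $\gamma_j$: a card at position $i\le j$ is sent to position $n+1-i$, which is not a shift at all. The missing computation is that $\gamma_j$ leaves both $o(c)$ and $\pi(c)$ unchanged: for $i>j$ the new position $i-j$ has the same parity as $i$ since $j$ is even, and for $i\le j$ the new position $n+1-i$ has the same parity as $i$ precisely because $n$ is odd. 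That last line is where the word ``odd'' actually earns its place; your paragraph about whole-deck turnovers is a red herring (a full turnover is not among the DATO moves in either parity, and for even $n$ it is already subsumed by $\delta_{2n}$ anyway). Once you add the one-line check for $\gamma_j$, your induction closes and the proof is complete and essentially identical to the paper's.
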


Note that the Odd Hummer principle applies to an odd number of cards and yields an analog to the Hummer principle. This allows us to re-imagine and reconstruct the nearly 90 years of tricks and theory on CATO shuffles and the Hummer principle.

So let us now discuss a little more about the history of the Hummer principle, how the idea of generalization arose, what it consists of, and how the Odd Hummer principle can be applied.

\section{About the Hummer principle}

In this section, we will discuss the history behind the Hummer principle. To begin, let us define what Hummer shuffles, better known as \textit{CATO} shuffles (an acronym for Cut And Turn Over) are, which are defined on a deck of $2n$ cards, where $n \in \mathbb{N}$, as the following shuffles:

\begin{definition}[CATO shuffles]\label{def:cato-shuffle}
CATO shuffles consist of applying the following two shuffles in any order and as many times as desired:

\begin{itemize}
\item Cut the deck of cards at any point.
\item Turn over an even number of cards from the top as if it were a single card.
\end{itemize}
\end{definition}

In more formal terms, the first action is a left shift where the first element is moved to the last position; and the second movement can be thought of as reversing the order of the cards to be turned over and changing their orientation. For example, if we take the first four cards $(1,2,3,4)$ from a deck (array) of 6 cards $(1,2,3,4,5,6)$ and turn them over as a single card, we obtain $(1,2,3,4,5,6) \mapsto (\overline{4},\overline{3},\overline{2},\overline{1},5,6)$ where the bar denotes the change in orientation of the card. With this in mind, we can state the Hummer principle as follows.

\begin{theorem}[Hummer principle \cite{persigraham}]\label{theorem:hummer-principle}
For $n\in\mathbb{N}$. Consider a deck of $2n$ cards, all face down. After any number of CATO shuffles, the following regularity holds:

\begin{itemize}
\item The number of face-up cards in even-numbered positions is equal to the number of face-up cards in odd-numbered positions.
\end{itemize}

\end{theorem}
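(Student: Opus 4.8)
The plan is to attach to each individual card a parity bit that is nearly invariant under CATO shuffles and then to read off the conclusion from its value on the starting deck. Concretely, for a card $c$ currently in position $p(c)\in\{1,\dots,2n\}$ with orientation $o(c)\in\{0,1\}$ (take $o=0$ for face down and $o=1$ for face up), I would consider the bit $v(c)=p(c)+o(c)\in\mathbb{Z}/2\mathbb{Z}$, or equivalently the integer $S=\sum_{c}(-1)^{v(c)}$. Everything then reduces to tracking how $v$ (or $S$) transforms under the two generating moves of Definition~\ref{def:cato-shuffle}.

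For a turn-over of the top $2k$ cards, the card in position $j\le 2k$ moves to position $2k+1-j$ and has its orientation flipped; since $j$ and $2k+1-j$ have opposite parities — this is the step that uses evenness of the turned-over block — both summands of $v(c)$ flip and $v(c)$ is unchanged, while cards in positions $>2k$ stay put. Hence a turn-over fixes every $v(c)$, and so fixes $S$. For a cut that moves the top $m$ cards to the bottom, every position is shifted by $m$ modulo $2n$; because $2n$ is even, the parity of each position changes by exactly $m\bmod 2$, and orientations are untouched, so the cut either fixes all $v(c)$ (if $m$ is even) or complements all of them simultaneously (if $m$ is odd), i.e.\ it replaces $S$ by $(-1)^{m}S$.

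Now I would evaluate everything in the initial state: all $2n$ cards are face down, so $o(c)=0$ and $v(c)$ is just the parity of the starting position of $c$, giving exactly $n$ cards with $v=0$ and $n$ with $v=1$, and $S=\sum_{p=1}^{2n}(-1)^{p}=0$. By the previous paragraph, after any finite sequence of CATO moves the multiset $\{v(c)\}_c$ is either unchanged or globally complemented, so in either case exactly $n$ cards have $v(c)=0$ (since $2n-n=n$), equivalently $S=0$ throughout. Finally I would translate back: a card has $v(c)=0$ precisely when it is in an even position and face down, or in an odd position and face up. Writing $A$ and $B$ for the numbers of face-up cards in even, resp.\ odd, positions and $C$ for the number of face-down cards in even positions, the count of cards with $v=0$ is $B+C$, while the count of all cards in even positions is $A+C$; since both equal $n$, we get $A=B$, which is the asserted regularity.

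The individual computations are routine; the single delicate point — and the only place where the hypotheses are genuinely spent — is the parity bookkeeping in the two moves. The identity ``$j$ and $2k+1-j$ have opposite parity'' is what forces the turned-over block to have even size, and ``all position parities shift in lockstep under a cut'' is what forces the deck to have even length $2n$; if either fails, $S$ is no longer preserved. Isolating these as the load-bearing uses of evenness is the main conceptual obstacle of the argument, and it is exactly what indicates how the shuffle and the statement must be reshaped to obtain the Odd Hummer Principle.
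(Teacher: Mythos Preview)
Your argument is correct. The paper itself does not give a self-contained proof of Theorem~\ref{theorem:hummer-principle}; it only remarks that the result can be derived from the Red-black separation principle (Theorem~\ref{theorem:red-black-separation}) together with the definition of the CATO shuffle and the parity of the position, without spelling out the invariance step. Your route is different and more direct: rather than passing through a two-pile counting identity, you attach the bit $v(c)=p(c)+o(c)\bmod 2$ to each card and check that the two CATO generators either fix every $v(c)$ or complement them all simultaneously, so that the initial balance $S=0$ persists. This is in fact much closer in spirit to the paper's own proof of the Odd Hummer Principle (Theorem~\ref{theorem:dsoto-principle}), where position parity is tracked through $\delta_k$ and $\gamma_j$. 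What your approach buys is a transparent per-card invariant that pinpoints exactly where each evenness hypothesis is consumed --- the even block size in the turn-over, the even deck size in the cut --- which is precisely the diagnostic one needs to design the DATO shuffles; what the paper's sketch buys is a historical and conceptual link to the older Red-black separation principle, at the cost of leaving the parity-invariance verification implicit.
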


Given the Hummer principle, let us discuss where it originated from. To do so, we establish the following preliminary result.

\begin{theorem}[Red-black separation principle]\label{theorem:red-black-separation}
Given a deck with n red cards and n black cards, which is shuffled and separated into two piles with the same number of cards, the following holds:
\begin{itemize}
\item The number of red cards in one pile is equal to the number of black cards in the other pile.
\end{itemize}
\end{theorem}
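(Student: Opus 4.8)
The plan is to reduce the statement to an elementary double-counting identity; the shuffle plays no role whatsoever, only the final even split matters. First I would fix notation: call the two resulting piles $A$ and $B$. Since the deck of $2n$ cards is separated into two piles with the same number of cards, each pile contains exactly $n$ cards. Write $r_A, b_A$ for the number of red and black cards in pile $A$, and $r_B, b_B$ for the corresponding counts in pile $B$.

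Next I would record the two obvious constraints. Counting the cards of pile $A$ by colour gives $r_A + b_A = n$, because every card is either red or black and the pile has $n$ cards. Counting all the red cards of the deck by which pile they landed in gives $r_A + r_B = n$, because there are $n$ red cards in total and each one lies in exactly one pile.

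Finally I would combine the two equations to obtain $r_B = n - r_A = b_A$, which is exactly the assertion that the number of red cards in pile $B$ equals the number of black cards in pile $A$. The roles of $A$ and $B$ (or of the two colours) are interchangeable, so the analogous identity $r_A = b_B$ holds as well, and the statement does not depend on which pile is singled out. There is no real obstacle here: the only thing to be careful about is emphasising that nothing about the shuffling procedure is used — the conclusion follows purely from the equal-size split of a deck containing equally many cards of each colour.
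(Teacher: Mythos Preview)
Your proof is correct and follows essentially the same approach as the paper's own proof: both fix the number of red cards in one pile, use that the total number of red cards is $n$ and that each pile has size $n$, and conclude by an elementary subtraction. Your write-up is a touch more explicit in naming all four colour-counts, but the underlying argument is identical.
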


\begin{proof}
Consider that after shuffling the $2n$ cards, $n$ cards of red color and $n$ of black color, the deck of size $2n$ is separated into a deck $A$ and a deck $B$ of size $n$; if there are $x$ red cards in deck $A$, then by hypothesis there are a total of $n$ cards of that color, which means that the remaining $n-x$ red cards will be in deck $B$, and therefore there will be $x$ black cards in deck $B$. That is, in deck $A$ there are $x$ red cards and $n-x$ black cards, and in deck $B$ there are $x$ black cards and $n-x$ red cards.

\end{proof}

The Red-black separation principle was first applied by the magician Stewart James in his tricks ``Tapping a Brain Wave" and ``The Psychic Pickpocket" in 1938 \cite{james}, and in the following years Bob Hummer showed great creativity in using this principle, which led to the development of the Hummer principle over time.

\begin{remark}
   The Hummer principle can be derived from Theorem \ref{theorem:red-black-separation}, and to prove the former, we only need to consider the definition of the \textit{CATO} shuffle and the parity of the position. 
    
\end{remark}

There is something important to consider about the red/black separation principle and the Hummer principle: they are both applied to an even number of elements. It is natural to wonder about these two results for an odd number of elements.

In terms of magic, there is no trick where something is done with an odd number of elements in a way that uses something analogous to Theorem \ref{theorem:hummer-principle}; in terms of mathematics, the situation is not encouraging either, since the few that consider this case always refers to the fact that the principle does not work for an odd number of cards. Therefore, looking at the principle from a not-yet-fully-explored perspective helps to extend these notions of the principles.

\section{\textit{CATO} shuffles as group actions}\label{section:cato-actions}

In the previous section, we defined the \textit{CATO} shuffles (Definition \ref{def:cato-shuffle}), which can be regarded as two actions of a group on a set, following Ensley's line of thought \cite{invariant}. The first action is to cut the deck of cards wherever desired; the second is to flip an even number of cards as a single unit. Let us first study these two actions of the \textit{CATO} shuffles.

Let $D_{2n}:=\set{1, 2, \ldots, 2n, \overline{1}, \ldots, \overline{2n}}$ be the set of $2n$ cards with their two possible orientations, where the number $i$ indicates the initial position of each card and the bar indicates a change in orientation; from now on we will use $D=D_{2n}$ as notation unless otherwise stated. Let $\alpha: D \to D$ be defined by $(1, 2, \ldots, 2n) \mapsto (2, 3, \ldots, 2n, 1)$, a left shift in a cycle. Cutting the deck of cards at any point, say at the $j$-th card, would result in the following:
\begin{equation*}
(1, 2, \ldots, j, \ldots, 2n) \mapsto (j+1, j+2, \ldots, 2n, 1, 2, \ldots, j),
\end{equation*}
and we can easily see that the first condition of the \textit{CATO} shuffles is equivalent to applying $j$ times $\alpha$.

If we consider the second action of the \textit{CATO} shuffles, which is to flip an even number of cards, let it be the first $k$ cards with $k$ even, then we can view it as the following map $\delta_k:D\to D$ where
\begin{equation*}
(1, \ldots, k, k+1, \ldots, 2n) \mapsto (\overline{k}, \overline{k-1}, \ldots \overline{2}, \overline{1}, k+1, k+2, \ldots, 2n),
\end{equation*}

The map $\delta_k$ is not actually described in this way in Hummer's notes, but is initially presented as flipping only the top two cards and cutting the deck. Therefore, we can define the following map $\beta: D \to D$ such that $(1, 2, \ldots,2n)\mapsto(\overline{2}, \overline{1}, 3, \ldots, 2n)$.

From this previous notation, we can see the following result.

\begin{proposition}\label{proposition:cato-as-actions}
The action $\delta_k$ of flipping an even number $k$ of cards from the top as a single card in the \textit{CATO} shuffles, can be obtained by compositions of $\alpha$ and $\beta$.
\end{proposition}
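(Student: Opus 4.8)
The plan is to realize the "flip the top $k$ cards as one block" map $\delta_k$ (for $k$ even) as a word in $\alpha$ and $\beta$, by peeling off the block two cards at a time. The key observation is that $\beta$ is exactly $\delta_2$: it reverses and turns over the top two cards. So the idea is to build $\delta_k$ inductively from $\delta_{k-2}$ together with $\beta$ and suitable powers of $\alpha$ (which rotate the deck so that the relevant two-card window sits on top, and then rotate back).

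Concretely, I would argue by induction on $k$ (even), with base case $k=2$ where $\delta_2 = \beta$. For the inductive step, suppose $\delta_{k-2}$ is a composition of $\alpha$'s and $\beta$'s. To turn over the top $k$ cards, first apply $\delta_{k-2}$ to reverse-and-flip the top $k-2$; then I need to fold in the cards originally in positions $k-1$ and $k$. The cleanest way is to rotate the deck by $\alpha^{k-2}$ so that those two cards come to the top, apply $\beta$ to flip them, and then use another block-flip on a shorter segment to merge everything into the correct reversed order, finally undoing the rotation with the appropriate power of $\alpha$. I would track the positions and the orientation bars carefully at each stage; since $\alpha$ and $\beta$ are bijections on the finite set $D$, it suffices to check the images of all $2n$ labelled cards agree, which reduces to a finite bookkeeping check. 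An alternative, perhaps slicker, route is to observe that reversing a block of length $k$ equals a conjugate of reversing a block of length $k-2$ interleaved with a swap of the outer pair, which is the standard "reversal = product of adjacent transpositions" identity, adapted to keep track of the orientation flips that $\beta$ introduces.

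The main obstacle I anticipate is orientation bookkeeping: unlike a plain block reversal of an array, each application of $\beta$ both permutes positions \emph{and} toggles the bar on the two affected cards, and when a card is touched an even number of times its bar must return to "up". So the induction has to be set up so that the intermediate cards (those in positions $1,\dots,k-2$) are flipped exactly the right number of times — ending face down — while the newly incorporated pair ends face up, matching the single-unit-flip semantics of $\delta_k$. I would handle this by keeping an explicit parity count of how many $\beta$'s hit each position in the constructed word, and choosing the powers of $\alpha$ so that those counts come out even for the interior cards and odd for the two cards being absorbed at each step. Once the parity constraints are met, the positional part is just the reversal identity for permutations, which is routine.
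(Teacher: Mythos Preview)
Your plan---induction on even $k$, expressing $\delta_k$ in terms of $\delta_{k-2}$ together with $\alpha$ and $\beta$---is precisely the strategy the paper adopts. The paper, however, commits to an explicit word: it computes $\delta_4$ and $\delta_6$ by hand, spots the pattern, and records $\delta_k=\prod_{i=0}^{k-2}\beta_i(\alpha\beta)^i\alpha^{-i}$ (with $\beta_0=\mathrm{Id}$ and $\beta_i=\beta$ for $i\ge 1$). The accompanying remark shows that each factor $\beta(\alpha\beta)^i\alpha^{-i}$ extends the reversed block by one card, so the paper's recursion is effectively one-card-at-a-time rather than two-at-a-time, and this is what makes the orientation bookkeeping come out automatically.

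Your specific inductive step, by contrast, does not yet close. If one carries out your recipe---apply $\delta_{k-2}$, rotate by $\alpha^{k-2}$, apply $\beta$, then un-rotate by $\alpha^{-(k-2)}$---the resulting deck is $(\overline{k-2},\ldots,\overline{1},\overline{k},\overline{k-1},k{+}1,\ldots,2n)$, which differs from $\delta_k$'s output by a cyclic shift of the top $k$ positions. That residual shift is not repaired by ``another block-flip on a shorter segment'' together with a power of $\alpha$, since $\alpha$ rotates the entire deck rather than the top block; one needs further conjugates of $\beta$, and supplying them is exactly the content of the paper's explicit formula. So your parity-tracking idea is on the right track, but to turn it into a proof you still have to produce a concrete word and verify it---writing out small cases and reading off the pattern, as the paper does, is the quickest way to finish.
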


\begin{proof}
We will read the compositions of the permutations from left to rigth. Let $k\leq2n$ be the number of cards to flip. If $k=2$, $\delta_2=\beta$, which is sufficient. If $k=4$, $\delta_4 = \beta(\alpha\beta)^2\alpha^{-2}\beta\alpha\beta\alpha^{-1}\beta$; if $k=6$, $\delta_6 = \beta(\alpha\beta)^4\alpha^{-4}\beta(\alpha\beta)^3\alpha^{-3}\beta(\alpha\beta)^2\alpha^{-2}\beta\alpha\beta\alpha^{-1}\beta$; where we can already see a regular pattern, so proceeding by induction we have that for $k\geq 2$, $\delta_k = \prod_{i=0}^{k-2}\beta_{i}(\alpha\beta)^{i}\alpha^{-i}$, where $\beta_i=\beta$ if $i\neq0$ and $\beta_0=I_d=\beta^0$, which completes the proof.
\end{proof}

\begin{remark}
In the above proof, it should be noted that: first, the maps $\alpha$ and $\beta$ do not commute; second, the map $\delta_k$ can be constructed in other ways, the ones given in the previous proof were the most regular for the inductive writing of $\delta_k$. Furthermore, we can see what happens with $k\geq0$:
\begin{itemize}
    \item $\prod_{i=0}^{k}\beta_{i}(\alpha\beta)^{i}\alpha^{-i}((1, \ldots, 2n))=(\overline{k}, \ldots,\overline{2},\overline{1}, k+1, \ldots, 2n)$, if k is even.
    \item $\prod_{i=0}^{k}\beta_{i}(\alpha\beta)^{i}\alpha^{-i}((1, \ldots, 2n))=(k, \ldots, 2, 1, k+1, \ldots, 2n)$, if $k$ is odd.
\end{itemize}
\end{remark}

So far in this section, we have been able to reduce the CATO shuffles to the maps $\alpha$ and $\beta$, since we can see the CATO shuffles as any action in the group $\langle\alpha,\beta\rangle$, which is actually a subgroup of what is known as the signed permutations, which we can be define as a permutation $\pi$ of the set $\lbrace n,-n+1,\ldots,-1,1,2,\ldots,n\rbrace$, one example for $n=6$ is $\pi=2,-3,6,-1,4,5$, in this case we can think the sign as a orientations of the cards.

The goal now is to consider this subgroup as acting on the set $D$, and we will be interested in the invariants of this action.

\section{Invariants under \textit{CATO} shuffles}

In \cite{invariant}, we can see a study about \textit{CATO} shuffles acting on a set and the invariants on the deck of cards, but only for the case with four cards, which in magic is known as the \textit{Baby Hummer}, a magic trick created in 1968 by the magician Charles Hudson. This study is carried out by identifying three permutations, $\alpha$ and $\beta$, as we have described in the previous chapter, along with a permutation $\delta$ that takes $(1,2,3,4)\to(\overline{4},\overline{3},\overline{2},\overline{1})$, but we will ignore $\delta$ since $\delta=\delta_4\in\langle\alpha,\beta\rangle$.

In \cite{invariant}, there are results that we will extend to an odd number of cards, and for that, it is important to consider the following.

\begin{definition}[Maloriented cards]
For $D'_{n}=\set{1,2,\ldots,n,\overline{1},\ldots, \overline{n}}$ and an array $A$ of $n$ elements in $D'^n$, we will say that there are $i$ maloriented cards if there are $n-i$ cards with the same orientation, with $i<n-i$.
\end{definition}

Let us illustrate this concept with an example. Consider the following array: $(\overline{5},\overline{4},\overline{1},3,6,\overline{2})$. The maloriented cards are 3 and 6. In the array $(\overline{1},3,4,5,6,\overline{2})$, the maloriented cards are 1 and 2.

Now we talk about the Baby Hummer. In mathematical terms, the trick consists of applying the CATO shuffles to a specific array of 4 cards, $(\overline{1},2,3,4)$, and then applying a final action that leave only the card 3 as a maloriented card.

The study of the Baby Hummer \cite{invariant} consists of taking sets of arrays that are invariant under the action of $H=\langle\alpha,\beta\rangle$ for $D_4=\set{1,2,3,4,\overline{1},\overline{2},\overline{3},\overline{4}}$, and using the previous definition we obtain the following results:

\begin{proposition}
Let $C$ be the set of arrays with only one maloriented card. Then, under the action of $H$, the set $C$ always have a one maloriented card.
\end{proposition}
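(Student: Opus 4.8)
The plan is to show that the property ``exactly one maloriented card'' is preserved by each generator of $H = \langle \alpha, \beta \rangle$ separately, and then conclude by induction on the length of a word in $H$. Since every element of $H$ is a composition of copies of $\alpha$, $\beta$, and their inverses, it suffices to check that if an array $A \in D_4$ has exactly one maloriented card, then so do $\alpha(A)$, $\alpha^{-1}(A)$, $\beta(A)$, and $\beta^{-1}(A)$.

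First I would dispose of $\alpha$: the cut $\alpha$ is a cyclic shift that does not change the orientation of any card, it only permutes positions. Hence the multiset of orientations is unchanged, so if there is exactly one card whose orientation differs from the other three before applying $\alpha$, the same is true afterward; the same holds for $\alpha^{-1}$. The substantive case is $\beta$, which sends $(x_1, x_2, x_3, x_4) \mapsto (\overline{x_2}, \overline{x_1}, x_3, x_4)$, i.e.\ it flips the orientation of exactly the top two cards (and swaps them, which is irrelevant to counting orientations). So $\beta$ changes the number of face-up cards among positions $1,2$ from $j$ to $2 - j$. I would argue by cases on how the single maloriented card sits relative to the top two positions. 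If the lone maloriented card is in position $1$ or $2$, then positions $1,2$ contain one card of each orientation; $\beta$ flips both, so positions $1,2$ still contain one card of each orientation, and positions $3,4$ are untouched — so there is still exactly one maloriented card (now the other one of the top two). If the lone maloriented card is in position $3$ or $4$, then positions $1,2$ are both in the majority orientation; after $\beta$ they are both in the minority orientation, so now three cards share an orientation (the two that were flipped plus the old maloriented one) and one card — an old top card — is the unique maloriented one. Either way the conclusion holds, and since $\beta^2 = I_d$ we get $\beta^{-1} = \beta$ for free.

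With the generator-wise claim in hand, the induction is immediate: write any $c \in H$ as $g_1 g_2 \cdots g_m$ with each $g_i \in \{\alpha, \alpha^{-1}, \beta\}$, assume inductively that $g_2 \cdots g_m (A)$ has exactly one maloriented card, and apply the single-generator result to $g_1$. The base case $m = 0$ is the hypothesis $A \in C$.

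I do not expect a real obstacle here; the only thing to be careful about is the bookkeeping in the definition of ``maloriented'' — the definition requires $i < n - i$, i.e.\ with $n = 4$ the minority count must be strictly less than the majority, so ``one maloriented card'' genuinely means the orientation split is $1$ versus $3$ (the case $2$ versus $2$ is excluded, and indeed it never arises from a $1$-versus-$3$ split under $\alpha$ or $\beta$, since $\alpha$ preserves the split and $\beta$ toggles a $1/3$ split to a $3/1$ split). I would state this explicitly so the reader sees that $C$ never collapses into the ambiguous $2/2$ situation. A remark worth adding: the same argument shows that for general even $2n$, the set of arrays with exactly one maloriented card need \emph{not} be invariant, since $\beta$ can move a $1$-versus-$(2n-1)$ split to a $(2n-1)$-versus-$1$ split only when it flips exactly the right cards — for $n = 2$ this always works, which is precisely why the Baby Hummer lives at four cards; this is a good place to foreshadow why the odd case needs the different invariant introduced later.
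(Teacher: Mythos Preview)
The paper does not actually supply a proof of this proposition; it is quoted as a result from Ensley's study of the Baby Hummer (the reference \cite{invariant}) and stated without argument. Your proof is therefore an addition rather than a reproduction, and there is nothing in the paper to compare it against line by line.

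On its own terms your approach --- check invariance under each generator of $H$ and induct on word length --- is correct and is the natural one. The treatment of $\alpha$ is fine. In the case analysis for $\beta$, your second case contains a small slip: when the lone maloriented card sits in position $3$ or $4$ and $\beta$ flips the top two, the new unique maloriented card is the \emph{other bottom} card (the one among positions $3,4$ that was previously in the majority), not ``an old top card'' as you wrote. The orientation count (three of one kind, one of the other) is right; only the identification of which card is the odd one out is misstated. Fix that phrase and the argument is complete.

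Your closing remark about why four cards are special is correct in spirit, but the phrasing ``move a $1$-versus-$(2n-1)$ split to a $(2n-1)$-versus-$1$ split'' obscures the actual failure mode: for $2n \geq 6$, when the maloriented card lies outside the top two positions, $\beta$ sends a $1$-versus-$(2n-1)$ split to a $3$-versus-$(2n-3)$ split, which is no longer a single-maloriented configuration at all. Stating it that way makes the breakdown for larger decks explicit and better motivates the parity-based invariant the paper turns to next.
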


\begin{proposition}
Without loss of generality, consider the four-card array $(\overline{1},2,3,4)$. Under the action of $H$, the card 3 is always two positions away from the maloriented card.
\end{proposition}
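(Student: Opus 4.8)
The plan is to repackage the claim as one congruence and show the two generators of $H$ preserve it. Read positions cyclically modulo $4$. For an array in the orbit of $(\overline{1},2,3,4)$ under $H$, let $m$ be the position of the unique maloriented card --- which exists and is unique throughout the orbit by the previous proposition --- and let $t$ be the position of the card labelled $3$. Then ``card $3$ is two positions away from the maloriented card'' is exactly $m-t\equiv 2\pmod 4$ (and since $2\equiv-2\pmod 4$ there is no ambiguity in ``two positions away''). So it suffices to show that the set of arrays satisfying $m-t\equiv 2\pmod 4$ is closed under $\alpha$ and under $\beta$: it contains $(\overline{1},2,3,4)$, where $m=1$ and $t=3$, and --- using $\beta^{2}=I_d$ and $\alpha^{4}=I_d$, so that every element of $H$ is a positive word in $\alpha$ and $\beta$ --- it then contains the whole orbit.

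The $\alpha$ step is immediate: the cut sends $(a_1,a_2,a_3,a_4)$ to $(a_2,a_3,a_4,a_1)$, hence moves the card in slot $j$ to slot $j-1$ modulo $4$ and changes no orientation; so the maloriented card stays the same card, now in slot $m-1$, card $3$ is in slot $t-1$, and $m-t$ is unchanged. For $\beta$, which swaps slots $1$ and $2$ while reversing the orientation in each, I would split on the value of $m$ and exploit the hypothesis $t\equiv m-2$ to locate card $3$. If $m\in\{1,2\}$ then $t\in\{3,4\}$, so card $3$ is not among the top two: flipping slots $1$ and $2$ turns the lone minority card into the majority orientation and turns the majority card it swaps with into the minority orientation, the net effect being that the minority slot is still $m$ while card $3$ does not move, so $m-t$ is preserved. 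If $m=3$ then $t=1$: slots $1$ and $2$ both carry majority-oriented cards, so after $\beta$ three slots share that orientation and slot $4$ becomes the unique minority slot, i.e.\ $m$ moves from $3$ to $4$; simultaneously card $3$ is carried from slot $1$ to slot $2$, so $t$ moves from $1$ to $2$; both increase by $1$ and $m-t$ is preserved. The case $m=4$, $t=2$ is the mirror image, $m$ and $t$ each dropping by $1$. In every case the congruence survives, the induction closes, and since $m-t\equiv 2$ forces $m\neq t$ the card $3$ is never itself maloriented, so the statement is always meaningful.

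I expect the $\beta$ step for $m\in\{3,4\}$ to be the only real obstacle. There no card other than card $3$ passes through slots $1$ or $2$, yet the maloriented slot still moves --- because flipping the top two cards changes which orientation is in the minority --- and the point is that this $\pm1$ shift of $m$ is exactly matched by the forced motion of card $3$ (forced precisely by the inductive hypothesis), which is what makes $m-t$ the right invariant. Getting this synchronisation correct, and being careful that the inductive hypothesis may legitimately be used to place card $3$, is the crux; the base case, the $\alpha$ step, and the $m\in\{1,2\}$ subcase of $\beta$ are all routine checks from the definitions in Section~\ref{section:cato-actions}.
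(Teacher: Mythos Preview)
Your argument is correct. The paper itself does not supply a proof of this proposition --- it is quoted from \cite{invariant} as part of the background on the Baby Hummer --- so there is nothing to compare against directly. Your approach of encoding the claim as the congruence $m-t\equiv 2\pmod 4$ and checking closure under the two generators is clean and efficient; the case analysis for $\beta$ is handled carefully, and in particular you correctly identify the delicate point that when $m\in\{3,4\}$ the majority orientation itself switches, forcing $m$ to shift by $\pm1$ in lockstep with $t$. The reduction to positive words via $\alpha^4=\beta^2=I_d$ is also sound. One small cosmetic remark: in the $m\in\{1,2\}$ subcase your verbal description (``turns the lone minority card into the majority orientation and turns the majority card it swaps with into the minority orientation'') could be read as saying the minority \emph{card} changes, which it does, while the minority \emph{slot} does not --- you reach the right conclusion, but a reader might stumble there.
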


These propositions indicate that there are certain conditions, we call it Hummer invariants, that no matter how many times the \textit{CATO} shuffles are performed, will remain invariant. Specifically, the regularity of the Hummer principle (Theorem \ref{theorem:hummer-principle}) is maintained, which refers to the relationship between the number of face-up and face-down cards, and their relationship to the parity of their position. This will be the key to generalize the result to an odd number of cards.

\section{Generalization}

The ideas presented in the previous sections  will lead us to what we will define as \textit{Deal And Turn Over} (\textit{DATO}) shuffles for a deck of any number of cards.

\begin{definition}[\textit{DATO} shuffles]\label{def:dato}
\textit{DATO} shuffles consist of applying the following two shuffles in any order and as many times as desired:

\begin{itemize}
\item Dealing an even number of cards and completing the cut.
\item Turning over an even number of cards from the top as if they were a single card.
\end{itemize}
\end{definition}

Unlike \textit{CATO} shuffles, \textit{DATO} shuffles do not cut the deck at a given point, but from a card trick perspective, dealing in a certain way, does cut the deck. Therefore, we need to establish the property that will remain invariant under \textit{DATO} shuffles, as follows:

\begin{itemize}
\item The face-up cards in even positions were originally cards in odd positions, and vice versa. The cards face-up in odd positions were originally cards in even positions.
\end{itemize}

Note that the previous condition is an invariant under the action of \textit{CATO} shuffles if applied to an even number of cards. For an odd number of cards, \textit{CATO} shuffles no longer preserve this condition because if $\alpha$ is applied, the parity of all cards changes, except for the parity of the first card. \textit{DATO} shuffles are a solution to ``track'' the initial parity of the position of the cards, depending on whether they are face-up or face-down, when dealing with an odd number of cards.

Consider performing the shuffles in Definition \ref{def:dato} as actions on the set $D_n'=\set{1,\ldots , n, \overline{1},\ldots, \overline{n}}$, where $n$ is an odd number. From now on, we will use $D'=D_n'$ when $n$ is odd. Flipping $k$ cards, with $k$ even, as a single one, the action remains the $\delta_k$ shuffle, but applied to the set $D_n'$, and dealing an even number of cards is the following action: for even $j$, we define $\gamma_j: D' \to D'$ such that
$$(1,2,\ldots,j,\ldots,n)\mapsto(j+1,\ldots,n,j,j-1,\ldots,2,1).$$

The \textit{DATO} shuffles can be regarded now as the group generated by $\delta_k$ and $\gamma_j$, in other words as the group $\langle \delta_k,\gamma_j \rangle$, where $k,j\leq n$.

\begin{theorem}[Odd Hummer Principle]\label{theorem:dsoto-principle}
Let $n\in\mathbb{N}$ be odd. For a deck of $n$ cards, all face down. After any number of DATO shuffles, the following holds:

\begin{itemize}
\item The cards facing up in even positions were originally in odd positions, and vice-versa: The cards facing up in odd positions were originally in even positions.
\end{itemize}

\end{theorem}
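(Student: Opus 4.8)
The plan is to track an invariant through each of the two generating shuffles $\delta_k$ and $\gamma_j$ and then close by induction on the number of shuffles performed. For a card currently sitting in position $p$ with orientation $o \in \{\text{face-up},\text{face-down}\}$, and whose original position was $p_0$, the claimed invariant is the biconditional
\begin{equation*}
\text{(card in position $p$ is face-up)} \iff p \not\equiv p_0 \pmod 2,
\end{equation*}
holding simultaneously for every card in the deck. Initially every card is face-down and $p = p_0$, so the invariant holds. It then suffices to show each generator preserves it, and since $\langle \delta_k, \gamma_j\rangle$ is generated by these, any composition preserves it too; specializing to face-up cards gives exactly the statement of Theorem \ref{theorem:dsoto-principle}.

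First I would handle $\gamma_j$ with $j$ even. A card in position $p$ with $p \leq j$ moves to position $n + 1 - p$ and is \emph{not} flipped; a card with $p > j$ moves to position $p - j$ and is also not flipped. So I must check that the parity of the position-shift is consistent with keeping the face-up/face-down status correct relative to the (unchanged) original position. For $p \leq j$: the new position is $n+1-p$; since $n$ is odd, $n+1$ is even, so $n+1-p \equiv -p \equiv p \pmod 2$ — the parity of the position is unchanged, and since the orientation is unchanged, the biconditional still holds. For $p > j$: the new position is $p - j \equiv p \pmod 2$ because $j$ is even — again parity unchanged, orientation unchanged, so the invariant survives. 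This is the step where using $n$ odd (forcing $n+1$ even) is essential, and it is exactly what fails for the ordinary cut $\alpha$.

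Next I would handle $\delta_k$ with $k$ even. A card in position $p \leq k$ moves to position $k + 1 - p$ \emph{and gets flipped}; a card with $p > k$ stays in position $p$ and is not flipped. For $p > k$ nothing changes, so the invariant is trivially preserved. For $p \leq k$: the new position is $k+1-p$; since $k$ is even, $k+1$ is odd, so $k+1-p \equiv 1 - p \pmod 2$, i.e. the parity of the position \emph{flips}. Simultaneously the orientation flips. Since the biconditional $(\text{face-up}) \iff (p \not\equiv p_0)$ has both sides of a chain toggled — the truth value of "position parity differs from original" flips, and the truth value of "face-up" flips — the biconditional is preserved. I would write this out as: if before the move the card satisfied $o \iff (p \not\equiv p_0)$, then after, $\bar o \iff (k+1-p \not\equiv p_0)$, and since $k+1-p \not\equiv p_0$ is the negation of $p \not\equiv p_0$ (both differ by the parity flip of $p$), the two negations cancel and the biconditional is restored.

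The main obstacle I anticipate is purely bookkeeping rather than conceptual: being careful that "position" is indexed consistently (the reversal $p \mapsto k+1-p$ versus $p \mapsto k-p$ depends on whether positions start at $0$ or $1$), and that the definition of $\gamma_j$ as written, $(1,\ldots,j,\ldots,n)\mapsto(j+1,\ldots,n,j,j-1,\ldots,2,1)$, genuinely sends position $p\le j$ to position $n+1-p$ and position $p>j$ to position $p-j$. Once those index conventions are pinned down, the parity arithmetic is a one-line check for each of the four cases above, and the induction on the number of DATO shuffles closes the proof immediately. I would also remark that the argument shows more: the full permutation-with-signs data is constrained, not just the count of face-up cards, which is the sense in which this is genuinely an analog of the Hummer principle for odd $n$.
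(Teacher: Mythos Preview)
Your proof is correct and follows essentially the same approach as the paper: both arguments case-split on the two generators $\delta_k$ and $\gamma_j$, compute the new position of a card at position $p$ in each case, and verify that $\gamma_j$ preserves position-parity without flipping while $\delta_k$ flips both position-parity and orientation, so the biconditional invariant is preserved. Your write-up is more explicit about the invariant and the induction, but the mathematical content is identical.
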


\begin{proof}

Note that if a card is in position $i$, after applying $\delta_k$ with $k$ even, we have that

$$\delta_k(i)=
    \begin{cases}
        \overline{k+1-i} & i \leq k\\
        i & \text{if } i > k
    \end{cases}$$

where the position of $\delta_k(i)$ is even (odd) if $i$ is odd (even).

Now, when applying $\gamma_j$ with even $j$, we have that

$$\gamma_j(i)=
    \begin{cases}
        i-j & \text{if } i \geq j\\
        n+1-i & i < j
    \end{cases}$$

where the position of $\gamma_j(i)$ is even (odd) if $i$ is even (odd).
 
\end{proof}

Theorem \ref{theorem:dsoto-principle} implies that all the results and tricks with the Hummer principle can be done with the Odd Hummer principle, and consequently it opens the door to think about how to extrapolate everything developed with the Hummer principle to an odd number of cards.

\begin{remark}
As previously discussed, in the \textit{DATO} shuffles compared to the \textit{CATO} shuffles, the ability to cut the deck and perform the permutation $\gamma_j$ is lost. However, other actions can be taken which are similar to performing a cut:

\begin{itemize}
    \item Take an odd number of cards from the top, flip them over, and place them on the bottom. Let $l$ be an odd number, and define $\lambda_l:D' \to D'$ such that
    $$(1,2,\ldots,l,\ldots,n)\mapsto(l+1,\ldots,n,\overline{l},\overline{l-1},\ldots,\overline{2},\overline{1}).$$
    
    \item Also we can use the following action. Let $l$ be an odd number, $\omega_l:D' \to D'$ such that:
    $$(1,2,\ldots,l,\ldots,n)\mapsto(l+2,\ldots,n,l+1,1,2,\ldots,l).$$
    
\end{itemize}

There are more actions that preserve the idea of cutting the deck.

\end{remark}

\section{Some magical ideas}
In the previous section, much emphasis was placed on finding shuffles analogous to \textit{CATO} from the point of view of being able to have cuts in the deck. In consideration for use in magic, having this ``freedom'' to cut the deck contributes greatly to the construction of card tricks. Additionally, having a way to cut most tricks that use \textit{CATO}, they can be adapted to \textit{DATO} much more easily.

In this vein, this chapter will present two other types of shuffles that can take advantage of all tricks related to the Hummer principle.

First, we will consider that each card in an array has three attributes. In this case, we will consider its orientation (face-up or face-down), its color (red or black), and the parity of its position in an array (even or odd). Taking this into account, if we start with an initial array of $n$ face-down cards, alternating by color, the Hummer and Odd Hummer principles applied to this array give us the following result:

\begin{corollary}\label{coro:tree-caulities}
Given a deck of $n$ face-down cards, with the cards intercalated by color, without loss of generality, red, black, red, black, etc. If any amount of DATO shuffles is applied when $n$ is odd, or CATO shuffles when $n$ is even, the following regularity holds:
\begin{itemize}
\item Knowing the properties of a card before the shuffle, then after the shuffles if we know two of three properties of the card (color, orientation, parity of position) we can know the last one.
\end{itemize}

\end{corollary}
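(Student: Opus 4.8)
The plan is to combine the two invariants we already have in hand. From the Hummer principle (Theorem~\ref{theorem:hummer-principle}) when $n$ is even, or the Odd Hummer principle (Theorem~\ref{theorem:dsoto-principle}) when $n$ is odd, we know exactly how the \emph{orientation} of a card relates to the \emph{parity of its current position} versus the \emph{parity of its original position}. The point to isolate first is this: for any card in the deck after the shuffles, knowing its orientation (face-up or face-down) together with its current position determines the parity of its original position. Indeed, in the even case a face-up card sits in a position of parity opposite to some fixed reference, and in the odd case Theorem~\ref{theorem:dsoto-principle} gives precisely the rule ``face-up $\Rightarrow$ current parity $\neq$ original parity, face-down $\Rightarrow$ current parity $=$ original parity.'' So among the three attributes \{orientation, current-position-parity, original-position-parity\}, the first two determine the third, and by the same equivalence any two of these three determine the remaining one.

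The second ingredient is the color attribute. First I would note that neither $\delta_k$ nor $\gamma_j$ (nor $\alpha$, $\beta$ in the CATO case) ever changes which physical card carries which color; color travels with the card. Since the initial array alternates colors red, black, red, black, \dots, the color of a card is a function of its \emph{original} position's parity: red in odd original positions, black in even (or the reverse, depending on the starting card). Hence ``color'' and ``original-position-parity'' are interchangeable labels for the same piece of information about a card. Substituting ``color'' for ``original-position-parity'' in the previous paragraph, the three attributes become \{orientation, current-position-parity, color\}, and any two of them determine the third. That is exactly the statement of Corollary~\ref{coro:tree-caulities}.

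So the proof is essentially a dictionary argument: the Hummer/Odd-Hummer principle already ties orientation to the two parities, and the alternating-color hypothesis identifies color with original-position parity because color is shuffle-invariant per card. I would organize it as: (i) recall the orientation/parity relation from the appropriate principle; (ii) observe color-invariance of $\delta_k$, $\gamma_j$ (and the CATO generators) and deduce the color--original-parity correspondence from the alternating start; (iii) substitute and conclude that the three-attribute relation holds, checking the three cases of ``which two are known.'' The main obstacle, such as it is, is bookkeeping the parity conventions consistently --- in particular being careful that the even-$n$ CATO case and the odd-$n$ DATO case are phrased so the ``opposite parity / same parity'' rule reads uniformly --- rather than anything structurally deep; once the two known invariants are lined up, the conclusion is immediate.
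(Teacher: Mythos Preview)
The paper gives no proof of this corollary at all; it simply states it as an immediate consequence of the Hummer and Odd Hummer principles and then illustrates it with an example. Your proposal supplies exactly the argument the paper leaves implicit---identify color with original-position parity (since colors alternate initially and are carried by the card under every generator), then invoke the orientation/parity relation from Theorem~\ref{theorem:hummer-principle} or Theorem~\ref{theorem:dsoto-principle}---so your approach is the intended one and is correct in substance. One small caveat worth tightening if you write it out fully: Theorem~\ref{theorem:hummer-principle} as stated is a \emph{counting} equality, not the per-card ``face-down $\Leftrightarrow$ parity preserved'' rule, and indeed under CATO a single application of $\alpha$ flips every card's position parity without changing orientation; the correct even-$n$ invariant is that (orientation)~$\oplus$~(current parity)~$\oplus$~(original parity) is the \emph{same} bit for all cards simultaneously, which still yields the two-determine-one conclusion once any single card fixes that global bit. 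The paper is equally informal on this point, so your sketch is at the same level of rigor as the source.
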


\begin{example}
As we know, a standard modern deck of cards has four suits, two of them are black, spades ($\spadesuit$) and clubs ($\clubsuit$), and two of them are red, diamonds ($\diamondsuit$) and hearts ($\heartsuit$).

Let us suppose our initial array is $(A\spadesuit,2\heartsuit,3\spadesuit,4\heartsuit,5\spadesuit,6\heartsuit)$, with the cards facing down. If we flip two cards together ($\delta_2$), then cut one card ($\alpha$), and finally flip four cards ($\delta_4$), we obtain
$$(\overline{5\spadesuit},\overline{4\heartsuit},\overline{3\spadesuit},A\spadesuit,6\heartsuit,\overline{2\heartsuit})$$

Now, if we know that the cards that were facing down initially in even positions were red cards, it means that the fifth card, which is facing down, will be a red card at the end. The same applies for the other two properties, orientation and parity of the position.
\end{example}

\subsection*{DATO and CATO together}

Another observation to consider (personally my favorite) is the fact that if you have a deck with $n$ cards, where $n$ is even (odd), and it is divided into two piles of odd (even) quantities, you can apply the \textit{DATO} (\textit{CATO}) shuffles to the new decks.

\begin{example}
Suppose we have an array of twenty cards to which we apply the \textit{CATO} shuffles. Then, at some point, if we cut a stack of 7 cards and leave it on the table, we can now apply the \textit{DATO} shuffles to the remaining 13 cards in our hand. Finally, we can recombine the pile on the table with the cards in our hand to have the full deck of twenty cards, and the Corollary \ref{coro:tree-caulities} will still hold.
\end{example}

\subsection*{More than DATO}

Finally, let's talk about one last variation of the invariants we have seen under a modification to the shuffling with regard to the cards that are turned over. What would happen if instead of an even number of cards being flipped, an odd number is flipped?

If we think now of $m$ as an odd number and the map $\tau_m$ as the action on an arrangement that does the following:

\begin{equation*}
(1, \ldots, m, m+1, \ldots, n) \mapsto (\overline{m}, \overline{m-1}, \ldots \overline{2}, \overline{1}, m+1, m+2, \ldots, n),
\end{equation*}

We can describe new groups that generate new actions, $\langle \tau_m, \alpha\rangle$ and $\langle \tau_m, \gamma_j\rangle$, depending on whether we want to perform actions on an arrangement with an even or odd number of cards.

In this case, we can rephrase Theorems \ref{theorem:hummer-principle} and \ref{theorem:dsoto-principle} as follows:

\begin{theorem}
Let be $n\in\mathbb{N}$. Consider a array of $n$ cards, all face down and interspersed by one binary property (for instance: colors). Under the action of $\langle \tau_m, \alpha\rangle$ if $n$ is even or $\langle \tau_m, \gamma_j\rangle$ if $n$ is odd, the following regularity holds:

\begin{itemize}
    \item All the cards in even positions will have the same property value, and odd positions cards will have the other value of the property.
\end{itemize}
\end{theorem}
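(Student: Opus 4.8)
The plan is to recognize that the stated regularity says exactly that the array stays \emph{perfectly alternating} by the binary property after every shuffle, and to prove this by tracking, generator by generator, how the shuffle permutes the \emph{parities} of the original positions. Concretely, I would encode the property as a function $c$ on positions with $c(i)$ depending only on $i \bmod 2$ (so, without loss of generality, $c(1)\neq c(2)$, meaning the starting array is alternating), and then isolate the following invariant: there is a single $\epsilon\in\mathbb{Z}/2\mathbb{Z}$ such that the card currently occupying position $i$ started in a position congruent to $i+\epsilon \pmod 2$. If each generator $\sigma$ carries its own offset $\epsilon_\sigma$ with this property, then offsets add under composition (a card in final position $i$ was, one step earlier, in a position of parity $i+\epsilon_{\sigma_2}$, hence originally in one of parity $i+\epsilon_{\sigma_1}+\epsilon_{\sigma_2}$), so by induction on word length any sequence of shuffles has a well-defined global offset $\epsilon$; then all cards now in even positions came from positions of one fixed parity and share one colour, and those in odd positions share the other — which is the asserted regularity, with the two colours possibly interchanged relative to the start according to the parity of $\epsilon$. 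Thus the whole argument reduces to computing $\epsilon_\sigma$ for the three generators $\tau_m$, $\alpha$ (case $n$ even), and $\gamma_j$ (case $n$ odd).

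For $\tau_m$ with $m$ odd: from its defining formula the card landing in position $i\le m$ is the original card $m+1-i$, and since $m+1$ is even we have $m+1-i\equiv i \pmod 2$, while positions $i>m$ are untouched; hence $\tau_m$ preserves all parities and $\epsilon_{\tau_m}=0$. This is precisely where the hypothesis ``$m$ odd'' is used: for an \emph{even} flip $\delta_k$ one instead gets $k+1-i\equiv i+1$ on the reversed block but $i$ elsewhere, so the induced permutation of parities is not a single global shift — which is exactly why the even-flip statements (Theorems~\ref{theorem:hummer-principle} and~\ref{theorem:dsoto-principle}) must additionally track face-up/face-down status. For the cut $\alpha$ with $n$ even: position $i<n$ receives original card $i+1$ and position $n$ receives original card $1$, and in both cases the parity flips (using $n$ even for the wrap term), so $\epsilon_\alpha=1$. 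For $\gamma_j$ with $j$ even and $n$ odd: splitting the image into the block of the last $n-j$ original cards and the reversed block of the first $j$ — this is essentially the computation already displayed in the proof of Theorem~\ref{theorem:dsoto-principle} — one checks that position $i\le n-j$ holds original card $j+i\equiv i$ (using $j$ even), and position $n-j+l$ (for $1\le l\le j$) holds original card $j+1-l$, whose parity $1+l$ matches that of $n-j+l$ because $n-j$ is odd; hence $\gamma_j$ preserves all parities and $\epsilon_{\gamma_j}=0$.

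Assembling these, by induction on the length of the shuffle word and additivity of offsets, the array after any number of $\langle\tau_m,\alpha\rangle$- (respectively $\langle\tau_m,\gamma_j\rangle$-) shuffles is still alternating, with the colours on even and odd positions fixed by the parity of the accumulated offset, which is the claim. The only genuine work is the per-generator parity bookkeeping, and the single mildly fiddly point — the main obstacle, such as it is — is the wrap-around block of $\gamma_j$, where one must use the parities of $j$ and of $n$ simultaneously; the other two verifications are short case checks.
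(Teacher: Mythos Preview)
Your argument is correct. The paper does not actually supply a proof of this theorem --- it is stated and immediately followed by an example --- so there is nothing to compare against directly. That said, your method is exactly in the spirit of the paper's proof of Theorem~\ref{theorem:dsoto-principle}: check, generator by generator, how the shuffle acts on the parity of positions. Your per-generator computations ($\epsilon_{\tau_m}=0$ using $m$ odd, $\epsilon_\alpha=1$ using $n$ even, $\epsilon_{\gamma_j}=0$ using $j$ even and $n$ odd) are all accurate, and your explicit additivity-of-offsets argument for composition is in fact more careful than anything the paper writes down, since the proof of Theorem~\ref{theorem:dsoto-principle} checks the two generators separately but leaves the passage to arbitrary words implicit. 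Your side remark explaining why an even flip $\delta_k$ fails to induce a single global parity offset is also correct and nicely isolates why orientation must be tracked in that setting but not here.
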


\begin{example}
    Suppose the cards in an array of 17 cards are arranged as red, black, red, black, and so on. In this case, after applying any element of $\langle \tau_m, \gamma_j\rangle$ (shuffle), if the first card is a black card, then all the cards in odd positions will be black, and the cards in even positions will be red. The same would happen if we had 20 cards, for example, and used $\langle \tau_m, \alpha\rangle$.
\end{example}

 As we can see, thinking about invariants using the \textit{DATO} and \textit{CATO} shuffles opens up a world of possibilities, but there is still much to explore.

\section{Open questions}

The literature on magic tricks that use the Hummer principle is vast, and viewing the principle as the invariants it leaves in a deck of cards opens the door to several open questions:

\begin{itemize}
    \item Are there other types of shuffles that preserve the Odd Hummer principle that are not in the same group? In other words, is there another group different from $\langle \gamma_j, \delta_k\rangle$ that can be used in the principle?
    
    \item The \textit{CATO} shuffles can be seen as the group $\langle \alpha,\beta\rangle$, which has only two generators. So, it is natural to ask, what is the minimum number of generators for the \textit{DATO} shuffles $\langle \gamma_j, \delta_k\rangle$ ? What are those generators?
    \end{itemize}

In section \ref{section:cato-actions} we saw that \textit{CATO} and \textit{DATO} shuffles in the Hummer and Odd Hummer principles are subgroups of the group of signed permutations.
\begin{itemize}
    \item What other kind of permutations can be used to generalize the idea of the principles to objects that do not have only two orientations, or where the positions to consider are module $j$, or where the properties have more than two values?
\end{itemize}

There is still a long way to go, but this paper is a first step into the world of the multiple ways in which the Hummer principle can be generalized.

\bibliography{references}

\end{document}